\title
{Long cycles in locally expanding graphs, with applications}
\author{Michael Krivelevich
\thanks{School of Mathematical Sciences, Raymond and Beverly
Sackler Faculty of Exact Sciences, Tel Aviv University, Tel Aviv,
6997801, Israel. Email: krivelev@post.tau.ac.il. Research supported in
part by USA-Israel BSF grant 2014361.}
}
\begin{document}
\bibliographystyle{plain}
\maketitle
\newtheorem{thm}{Theorem}%[section]
\newtheorem{propos}[thm]{Proposition}
\newtheorem{defin}{Definition}
\newtheorem{lemma}{Lemma}[section]
\newtheorem{corol}[lemma]{Corollary}%[section]
\newtheorem{thmtool}{Theorem}[section]
\newtheorem{corollary}[thmtool]{Corollary}
\newtheorem{lem}[thmtool]{Lemma}
\newtheorem{prop}[thmtool]{Proposition}
\newtheorem{clm}[thmtool]{Claim}
\newtheorem{conjecture}{Conjecture}
\newtheorem{problem}{Problem}
\newcommand{\Proof}{\noindent{\bf Proof.}\ \ }
\newcommand{\Remarks}{\noindent{\bf Remarks:}\ \ }
\newcommand{\Remark}{\noindent{\bf Remark:}\ \ }
%Probability
\newcommand{\whp}{{\bf whp}\ }
\newcommand{\prob}{probability}
\newcommand{\rn}{random}
\newcommand{\rv}{random variable}
%Hypergraphs
\newcommand{\hpg}{hypergraph}
\newcommand{\hpgs}{hypergraphs}
\newcommand{\subhpg}{subhypergraph}
\newcommand{\subhpgs}{subhypergraphs}
%Letters
\newcommand{\bH}{{\bf H}}
\newcommand{\cH}{{\cal H}}
\newcommand{\cT}{{\cal T}}
\newcommand{\cF}{{\cal F}}
\newcommand{\cD}{{\cal D}}
\newcommand{\cC}{{\cal C}}

\begin{abstract}
We provide sufficient conditions for the existence of long cycles in locally expanding graphs, and present applications of our conditions and techniques to Ramsey theory, random graphs and positional games.
\end{abstract}

\section{Introduction and main results}
In this paper, we are concerned with the following question, or rather question type: how long a cycle can we find in a graph $G$, assuming its vertex subsets up to a certain size, or in a certain range of sizes, expand? Of course, one needs to formalize the local expansion assumption to lay down concrete results.

As usually, for a graph $G=(V,E)$ and a vertex set $W\subset V$ we denote by $N_G(W)$ the external neighborhood of $W$ in $G$, i.e.,
$$
N_G(W)=\{v\in V\setminus W: v\mbox{ has a neighbor in }W\}\,.
$$
Using this notation, we can make one more step in specifying our meta-question: suppose we are given that in a graph $G=(V,E)$, the set $N_G(W)$ is large for all subsets $W\subset V$ up to a certain size, or in a certain range of sizes. What can be guaranteed then for the length of a longest cycle in $G$? We urge the reader to notice that conditions on external neighborhoods can be imposed for sets up to a certain size, independent of the order of the graph; thus in a sense assumptions of this sort are of {\em local} nature and can be viewed as {\em local expansion} conditions. (See \cite{HLW06, KS06} for a general background on expanders and pseudo-random graphs.)

The above setting appears to be quite natural; moreover, results of this sort can be handy in a variety of scenarios and applications (of which we will see some later in the paper). Yet, to the best of our knowledge there are few results of this sort available in the literature.

The most notable previous relevant result of this type is probably that of Brandt, Broersma, Diestel and Kriesell \cite{BBDK06}. In order to state their result and for future use in our paper, let us introduce the following notation. For a positive integer $k$ and a positive real $\alpha$, a graph $G$ is called {\em $(k,\alpha)$-expander} if $|N_G(W)|\ge \alpha|W|$ for all subsets $W\subset V$ of size $|W|\le k$. Brandt et al. proved in particular that if $G$ is a $(k,\alpha)$-expander and $\alpha\ge 2$, then $G$ contains a cycle of length at least $(\alpha+1)k$ (see Lemma 2.7 and the proof of Theorem 2.4 in their paper). The assumption $\alpha\ge 2$ plays a crucial role in their argument, as it allows to utilize the famed rotation-extension technique of P\'osa \cite{Pos76}; the argument essentially breaks down completely even for $\alpha=1.99$, say, in the assumption above.

One should notice that finding long paths, rather than cycles, in locally expanding graphs is a much easier task, see, e.g., Chapter 1 of \cite{KPPM16} for a survey of some techniques and results available. We will encounter substantial differences between paths and cycles in this context later in the paper. More generally, local expansion conditions have been used in results about embedding large bounded degree trees, see, e.g., \cite{FP87,Hax01,BCPS10}.

Our first result shows the existence of long cycles in locally expanding graphs.

\begin{thm}\label{th1}
Let $k>0,t\ge 2$ be integers. Let $G=(V,E)$ be a graph on more than $k$ vertices, satisfying:
$$
|N_G(W)|\ge t,\quad \mbox{for every } W\subset V,\  \frac{k}{2}\le |W|\le k\,.
$$
Then $G$ contains a cycle of length at least $t+1$.
\end{thm}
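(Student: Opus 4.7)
I will attack this via a depth-first search (DFS) tree, using the classical fact that every non-tree edge of $G$ with respect to such a tree $T$ is a back-edge; equivalently, for every vertex $x$ of $T$, the external neighborhood $N_G(V(T_x))$ of the subtree $T_x$ rooted at $x$ lies entirely inside the set of strict ancestors of $x$ on the root-to-$x$ path. If I can place a set $W\subset V$ of size in $[k/2,k]$ inside such a subtree (or inside a union of sibling subtrees) that starts far below the root, the hypothesis $|N_G(W)|\ge t$ will force the root-to-$W$ path to be long, and a long cycle will drop out by combining an ``up'' and a ``down'' portion of the tree with the back-edges witnessing $N_G(W)$.

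First, $G$ must have a connected component $C^{*}$ on more than $k$ vertices: otherwise either some component has size in $[k/2,k]$, or, since all components have size $<k/2$ while $|V|>k$, a greedy union of components produces $W$ with $|W|\in[k/2,k)$; in both cases $N_G(W)=\emptyset$, contradicting $|N_G(W)|\ge t\ge 2$. Root a DFS tree $T$ on $C^{*}$ arbitrarily, descend from the root by always moving to a child whose subtree has size $\ge k/2$, and halt at the first vertex $v$ for which no such child exists. Then $|V(T_v)|\ge k/2$ while every child $c$ of $v$ has $|V(T_c)|<k/2$. Define $W:=V(T_v)$ if $|V(T_v)|\le k$; otherwise, using $\sum_c|V(T_c)|>k-1$, greedily unite children's subtrees of $v$ until $|W|\in[k/2,k)$. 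Either way $|W|\in[k/2,k]$, so $|N_G(W)|\ge t$; and the inclusion $N_G(W)\subseteq\{v\}\cup\{\text{strict ancestors of }v\}$ bounds $|N_G(W)|$ by $\mathrm{depth}(v)+1$, forcing $\mathrm{depth}(v)\ge t-1$ and $v\ne\text{root}$.

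To read off the cycle, pick $a,b\in N_G(W)$ of maximum and minimum depth, together with $w_a,w_b\in W$ adjacent to $a,b$ respectively. Since $\{v\}\cup\{\text{strict ancestors of }v\}$ contains exactly one vertex per depth level and hosts at least $t$ members of $N_G(W)$, we obtain $\mathrm{depth}(a)-\mathrm{depth}(b)\ge t-1$. I will arrange $w_a,w_b$ to lie in a common subtree hanging below $v$ whenever possible: this is automatic when $W=V(T_v)$, and in the delicate case $a=v$ I choose $w_a$ to be the child $c$ of $v$ whose subtree contains $w_b$, which works because $vc$ is a tree edge. Then the walk $a\,w_a\,[\text{tree path to }w_b]\,w_b\,b\,[\text{tree path from }b\text{ down to }a]$ is a simple cycle (strict ancestors of $v$ are disjoint from $V(T_v)\supseteq W$) of length at least $2+(\mathrm{depth}(a)-\mathrm{depth}(b))\ge t+1$. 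The only remaining subcase, $a\ne v$ with $w_a,w_b$ in distinct children's subtrees of $v$, is handled by routing through $v$ with four extra tree edges, yielding length $\ge t+3$. The main obstacle is this $a=v$ subcase: a careless choice of $w_a$ in a subtree different from that of $w_b$ would force the cycle to revisit $v$, and the selection of $w_a$ as a child of $v$ whose subtree contains $w_b$ is what resolves it.
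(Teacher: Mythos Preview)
Your proof is correct and follows essentially the same approach as the paper: run DFS on a large component, locate a vertex $v$ below which one can assemble a set $W$ with $|W|\in[k/2,k]$, and use the DFS back-edge property to confine $N_G(W)$ to the root-to-$v$ path. The one notable difference is in the cycle extraction: you use \emph{two} witnesses $a,b\in N_G(W)$ (deepest and shallowest) and two connecting edges, which forces the case analysis around $a=v$ and around $w_a,w_b$ lying in different sibling subtrees; the paper instead takes only the single vertex $v^*\in N_G(W)$ farthest from $v$, picks any neighbor $w\in W$ of $v^*$, and observes that the tree path from $w$ up through $v$ to $v^*$ together with the edge $wv^*$ is already a cycle of length at least $t+1$ --- no second back-edge and no case split needed. (Also, note that in your Case~2 the vertex $v$ is always adjacent to the chosen children in $W$ via tree edges, so $v\in N_G(W)$ and hence $a=v$ automatically; your ``remaining subcase'' $a\ne v$ with $w_a,w_b$ in distinct sibling subtrees cannot actually occur there.)
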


Hence for {\em every} positive $\alpha$, a $(k,\alpha)$-expander contains a cycle of length at least $\frac{\alpha k}{2}$. The linear dependence on $\alpha$ is optimal in the range $0<\alpha<1$, as shown by the example of the complete bipartite graph with parts of size $k$ and $\lceil \alpha k\rceil$; a longest cycle here has length $2\lceil \alpha k\rceil$. The above theorem is a (rather far-reaching) generalization of the classical and simple graph theoretic result, postulating that every graph $G$ of minimum degree $t\ge 2$ contains a cycle of length at least $t+1$; we obtain it by choosing $k=1$ in Theorem \ref{th1}.

Our second result also provides sufficient conditions for the existence of long cycles. It is formulated in quite different terms and assumes in particular local sparseness of a given graph. Though the statement looks very different from the previous theorem, the proof is in fact fairly similar, as the reader will get to see.

\begin{thm}\label{th2}
Suppose reals $c_1>c_2>1$ and a positive integer $k$  satisfy: $\left(\frac{k}{2}-1\right)\left(\left(\frac{c_1}{c_2}\right)^{1/2} -1\right)\ge 2$.
Let $G=(V,E)$ be a graph on more than $k$ vertices, having the following properties:
\begin{enumerate}
\item $|E(G)|\ge c_1|V(G)|$;
\item every subset $R$ of $V(G)$ of size $|R|\le k$ spans at most $c_2|R|$ edges in $G$.
\end{enumerate}
Then $G$ has a cycle of length at least $\left(\frac{k}{2}-1\right)\left(\left(\frac{c_1}{c_2}\right)^{1/2} -1\right)+1$.
\end{thm}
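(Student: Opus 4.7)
The plan is to reduce Theorem~\ref{th2} to Theorem~\ref{th1} by converting the two hypotheses, inside a suitable subgraph of $G$, into a local neighborhood-expansion statement.

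First, I would pass to a densest subgraph $H\subseteq G$, i.e.\ one maximizing $|E|/|V|$. Standard properties then give $\delta(H)\ge|E(H)|/|V(H)|\ge c_1$ (otherwise removing a vertex of degree below the current density would strictly raise the density, contradicting maximality), and $|V(H)|>k$ (otherwise hypothesis~(2) applied to $R=V(H)$ would force $|E(H)|\le c_2|V(H)|$, contradicting density $\ge c_1>c_2$). Hypothesis~(2) is hereditary under subgraphs, so it still holds in $H$.

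Next, I would establish a local expansion in $H$: for every $W\subseteq V(H)$ with $|W|+|N_H(W)|\le k$,
\[
|N_H(W)|\ge\left(\sqrt{c_1/c_2}-1\right)|W|.
\]
The direct double count
\[
c_1|W|\le\sum_{v\in W}d_H(v)=2e_H(W)+e_H\bigl(W,N_H(W)\bigr)\le 2e_H\bigl(W\cup N_H(W)\bigr)\le 2c_2\bigl(|W|+|N_H(W)|\bigr)
\]
yields only the weaker ratio $c_1/(2c_2)-1$, which exceeds $\sqrt{c_1/c_2}-1$ when $c_1\ge 4c_2$ and so suffices in that regime. In the remaining range $c_2<c_1<4c_2$, however, the square-root factor has to be extracted by a more delicate argument, and this I expect to be the main technical hurdle. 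My guess is that the improvement comes from a refined double count that applies the sparseness hypothesis twice (once to $W\cup N_H(W)$ and once to a further iterate), or from an AM--GM-type inequality coupling $e_H(W)$ with the bipartite count $e_H(W,N_H(W))$.

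With the expansion in hand, for every $W\subseteq V(H)$ of size in $[k/2,k]$ one derives $|N_H(W)|\ge(\sqrt{c_1/c_2}-1)(k/2-1)=:t$ from the expansion lemma, modulo boundary cases (when $|W|+|N_H(W)|>k$) that are handled directly using the hypothesis $(k/2-1)(\sqrt{c_1/c_2}-1)\ge 2$. Since this hypothesis also forces $t\ge 2$, Theorem~\ref{th1} applied to $H$ (which has more than $k$ vertices) produces a cycle of length at least $t+1=(k/2-1)(\sqrt{c_1/c_2}-1)+1$ in $H$, hence in $G$, as claimed.
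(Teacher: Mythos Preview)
Your reduction to Theorem~\ref{th1} is the right architecture, and in fact the paper's proof follows essentially the same line (it re-runs the DFS argument inside the dense subgraph rather than quoting Theorem~\ref{th1} as a black box, but the substantive content is identical). What drives both arguments is precisely the expansion estimate
\[
|N_H(W)|\;\ge\;\bigl(\sqrt{c_1/c_2}-1\bigr)\,(|W|-1)\qquad\text{for every }W\subseteq V(H)\text{ with }|W|\le k,
\]
and this is where your proposal has a genuine gap.

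Neither of your two guesses (iterating neighborhoods, or an AM--GM coupling of $e_H(W)$ with $e_H(W,N_H(W))$) produces the square root. The missing idea is an \emph{averaging over random subsets of $W\cup N_H(W)$ of size exactly $|W|$}. As you noted, maximality of the density of $H$ forces every $W$ to be incident to at least $c_1|W|$ edges of $H$, and all of these lie inside the set $S:=W\cup N_H(W)$ of size $|W|+|N_H(W)|$. Pick $R\subseteq S$ uniformly at random with $|R|=|W|$; each edge inside $S$ survives in $R$ with probability exceeding $\bigl((|W|-1)/(|S|-1)\bigr)^2$, so some $R$ spans more than $c_1|W|\bigl((|W|-1)/(|S|-1)\bigr)^{2}$ edges. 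Since $|R|=|W|\le k$, hypothesis~(2) caps this at $c_2|W|$. Comparing and solving for $|N_H(W)|=|S|-|W|$ yields exactly the displayed inequality.

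Two side remarks. First, the averaging step only needs $|W|\le k$, not $|W|+|N_H(W)|\le k$; your ``boundary case'' split is unnecessary, and as written it does not work when $|W|$ is close to $k$ (then $|W|+|N_H(W)|>k$ merely gives $|N_H(W)|>0$). Second, the paper passes to a \emph{minimal} induced subgraph with $|E|/|V|\ge c_1$ rather than a densest one; either choice delivers the key property that every $W$ is incident to at least $c_1|W|$ edges.
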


In words, the statement says that if the local density (in sets of size at most $k$) is notably smaller than the global density of $G$, then $G$ contains a long cycle; ``long" here means linear in $k$, for absolute constants $c_1,c_2$.

The conditions assumed by Theorem \ref{th2} may appear rather artificial, but they come in fact quite naturally in the setting of random graphs, as given by the following proposition.

\begin{propos}\label{pr1}
Let $c_1>c_2>1$ be reals. Define $\delta=\left(\frac{c_2}{5c_1}\right)^{\frac{c_2}{c_2-1}}$. Let $G$ be a random graph drawn from the probability distribution $G\left(n,\frac{c_1}{n}\right)$. Then \whp every set of $k\le \delta n$ vertices of $G$ spans fewer than $c_2k$ edges.
\end{propos}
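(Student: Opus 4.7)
The plan is a routine first moment / union bound calculation. Fix $k\ge 1$ and a $k$-subset $S\subset V(G)$; then $e(G[S])$ is $\mathrm{Binomial}\bigl(\binom{k}{2},\,c_1/n\bigr)$, so Markov's inequality applied to $\binom{X}{\lceil c_2 k\rceil}$ gives
$$\Pr\bigl[e(G[S])\ge c_2 k\bigr]\ \le\ \binom{\binom{k}{2}}{c_2 k}\left(\frac{c_1}{n}\right)^{c_2 k}.$$
Union-bounding over the $\binom{n}{k}$ choices of $S$ and applying $\binom{a}{b}\le (ea/b)^b$ to both binomial coefficients, a short rearrangement collapses to
$$q_k\ :=\ \Pr\bigl[\exists\,S,\ |S|=k,\ e(G[S])\ge c_2 k\bigr]\ \le\ \left[D\left(\frac{k}{n}\right)^{c_2-1}\right]^k,$$
where $D:=e\bigl(ec_1/(2c_2)\bigr)^{c_2}$ depends only on $c_1,c_2$.

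The value $\delta=(c_2/(5c_1))^{c_2/(c_2-1)}$ in the statement is engineered precisely so that, for $k\le \delta n$, the base in brackets is bounded away from $1$: the $c_1,c_2$-factors cancel cleanly and
$$D\,\delta^{c_2-1}\ =\ e\left(\frac{ec_1}{2c_2}\cdot\frac{c_2}{5c_1}\right)^{c_2}\ =\ e\left(\frac{e}{10}\right)^{c_2}\ \le\ \frac{e^2}{10}\ <\ 1,$$
since $c_2\mapsto e(e/10)^{c_2}$ is decreasing and equals $e^2/10$ at $c_2=1$. Consequently $q_k\le \rho^k$ uniformly for $k\le\delta n$, with some $\rho=\rho(c_2)<1$, giving geometric decay in $k$.

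The only mild subtlety is that $\sum_{k\ge 1}\rho^k$ is merely a constant rather than $o(1)$, so to conclude \whp I will exploit the sharper estimate $q_k\le[D(k/n)^{c_2-1}]^k$ in the low-$k$ regime. For $k\le 2c_2$ one has $\binom{k}{2}<c_2 k$ and hence $q_k=0$ outright, so the sum effectively begins at $k_{\min}:=\lceil 2c_2\rceil+1$. Splitting the remaining sum at a threshold $k^\ast=C\log n$ chosen so that $\rho^{k^\ast}=o(n^{-1})$: the tail is bounded geometrically by $\rho^{k^\ast}/(1-\rho)=o(1)$, while on the head $k_{\min}\le k\le k^\ast$ the extra factor $(k/n)^{(c_2-1)k}$ is polynomially small in $n$, contributing $O\bigl((\log n)\cdot n^{-(c_2-1)k_{\min}/2}\bigr)=o(1)$. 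This routine bookkeeping is the only nuisance; the algebraic heart of the argument is the clean identity $D\delta^{c_2-1}=e(e/10)^{c_2}$, which both explains and verifies the specific exponent appearing in the definition of $\delta$.
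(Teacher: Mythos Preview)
Your proof is correct and follows essentially the same approach as the paper's: the same union bound, the same binomial estimates $\binom{n}{k}\le(en/k)^k$ and $\binom{\binom{k}{2}}{c_2k}\le(ek/(2c_2))^{c_2k}$, and the identical key identity $D\delta^{c_2-1}=e(e/10)^{c_2}$. The only cosmetic difference is where you split the sum --- the paper cuts at $n^{1/2}$ rather than $C\log n$, and does not bother with the (correct but unnecessary) observation that $q_k=0$ for $k\le 2c_2$ --- but the argument is otherwise the same.
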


We now proceed to describe applications of our results. The first application is in the realm of Ramsey theory. As customary, for graphs $G,H$ and a positive integer $r$ we write $G\rightarrow (H)_r$ if every $r$-coloring of the edges of $G$ produces a monochromatic copy of $H$. Furthermore, the {\em $r$-color size Ramsey number} $\hat{R}(H,r)$ is defined as the minimal possible number of edges in a graph $G$, for which $G\rightarrow (H)_r$. Size Ramsey numbers is a frequently studied subject in Ramsey theory, see \cite{FS02} for a survey. Here we concentrate actually on size Ramsey numbers of paths, rather than cycles (see \cite{HKL95} for a result on size Ramsey numbers of cycles). In the groundbreaking paper \cite{Bec83} Beck, resolving a \$100 question of Erd\H{o}s, proved that $\hat{R}(P_n,2)=O(n)$, where $P_n$ is an $n$-vertex path. (Recently, there has been a race to improve the multiplicative constant in this bound, see \cite{DP15,L16,DP16}; the best result is due to Dudek and Pra\l at, who showed in \cite{DP16} that $\hat{R}(P_n,2)\le 74n$ for large enough $n$.) Beck's argument is density-type and can easily be adapted to show that $\hat{R}(P_n,r)=O_r(n)$. The next question to ask then is what is the hidden dependence on $r$ in this linear in $n$ bound. This question has recently been addressed by Dudek and Pra\l at; they proved \cite{DP16} that $\hat{R}(P_n,r)\ge \frac{(r+3)r}{4}n-O(r^2)$ for all large enough $n$. In Section \ref{sect5} we will provide an alternative proof of the lower bound of the size Ramsey number of paths. As for the upper bound in terms of $r$, in the same paper it was shown that $\hat{R}(P_n,r)\le 33r4^rn$ for all large enough $n$. These two bounds are quite far apart in terms of dependence on $r$, leaving the exponential gap. Here we nearly close the gap by showing that the dependence on $r$ is (nearly) quadratic.

\begin{thm}\label{th3}
The size Ramsey number $\hat{R}(P_n,r)$ satisfies: $\hat{R}(P_n,r)\le r^{2+o_r(1)}n$, for all sufficiently large $n$.
\end{thm}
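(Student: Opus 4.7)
The plan is to exhibit a random host graph $G$ with roughly $r^{2+o_r(1)} n$ edges for which \whp $G \to (P_n)_r$. Concretely, I set $c_2 = \log r$ and $c_1 = r\log^2 r$, take $N$ to be a large enough multiple of $nr\sqrt{\log r}$, and sample $G \sim G(N, c_1/N)$. Since $c_1, c_2$ depend only on $r$, Proposition~\ref{pr1} applies for each fixed $r$ as $n \to \infty$: \whp every subset of $V(G)$ of size at most $k := \lfloor \delta N \rfloor$ spans fewer than $c_2 k$ edges, where $\delta = (c_2/(5c_1))^{c_2/(c_2-1)} = \Theta(1/(r\log r))$ (the exponent $c_2/(c_2-1) \to 1$ since $c_2 \to \infty$ with $r$). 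By Chernoff, also \whp $|E(G)| = (1+o(1))c_1 N / 2$.

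Now fix an arbitrary $r$-edge-coloring of $G$. By averaging, some color class $H$ contains at least $|E(G)|/r \ge (1-o(1)) \tilde c_1 N$ edges, where $\tilde c_1 := c_1/(2r) = (\log^2 r)/2$. Setting $\tilde c_2 := c_2 = \log r$, the subgraph $H$ inherits the local sparsity of $G$: every $R \subseteq V(H)$ with $|R| \le k$ spans fewer than $\tilde c_2 |R|$ edges in $H$. Since $\tilde c_1/\tilde c_2 = (\log r)/2 \to \infty$ with $r$ and $k \to \infty$ with $n$, the sufficient condition on $k$ in Theorem~\ref{th2} is met for all sufficiently large $n$ and $r$. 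Theorem~\ref{th2} then produces a cycle in $H$ of length at least
\[
\left(\frac{k}{2}-1\right)\left(\sqrt{\frac{\tilde c_1}{\tilde c_2}}-1\right) + 1 \ge (1-o(1))\,\frac{\delta N}{2}\sqrt{\frac{\log r}{2}}.
\]

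Choosing $N$ to be a sufficiently large constant multiple of $nr\sqrt{\log r}$ makes this length at least $n$, so $H$ contains $P_n$ and $G \to (P_n)_r$. The resulting edge count is $|E(G)| = (1+o(1))c_1 N/2 = \Theta(n\, r^2 \log^{5/2} r) = n\cdot r^{2+o_r(1)}$, giving the desired upper bound on $\hat R(P_n, r)$.

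The main obstacle is tuning the three parameters $c_1, c_2, N$ against competing pressures: the ratio $c_1/(2rc_2)$ must be bounded away from $1$ so that the majority color class still satisfies Condition~1 of Theorem~\ref{th2}; the window $\delta$ should not be too small, which forces the exponent $c_2/(c_2-1)$ toward $1$ and hence $c_2 \to \infty$; yet $c_1$ directly controls $|E(G)| \sim c_1 N/2$, so we want $c_1$ only marginally above $rc_2$. Letting $c_2$ grow polylogarithmically in $r$ and taking $c_1$ just above $rc_2$ threads this needle and yields an exponent of $r$ arbitrarily close to $2$.
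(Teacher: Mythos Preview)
Your proof is correct and follows the same template as the paper: a sparse random host $G\sim G(N,c_1/N)$, Proposition~\ref{pr1} for local sparsity, and Theorem~\ref{th2} applied to the majority color class. The paper's version parametrizes by a free constant $C>5$, taking $c_1=Cr$, $c_2=C/4$ to obtain the explicit bound $\hat R(P_n,r)<400^5 C\, r^{2+1/(C-4)}n$ (and then lets $C\to\infty$), whereas you fix $c_2=\log r$, $c_1=r\log^2 r$ up front; both parameter choices achieve the exponent $2+o_r(1)$.
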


The next problem to discuss is positioned at the intersection of Ramsey theory and random graphs. Given a random graph $G$, drawn from the probability distribution $G(n,p)$, how large a monochromatic connected component can one typically find in every $r$-coloring of the edges of $G$? This problem has been considered by  Bohman, Frieze, Krivelevich,
Loh and Sudakov \cite{BFKLS11}, and independently by Sp\"ohel, Steger and Thomas \cite{SST10}. It turns out that the answer is closely related to the threshold for the so-called $r$-orientability. For a positive integer $r$, a graph $G$ is called {\em $r$-orientable} if one can direct its edges in such a way that all out-degrees in the resulting digraph are at most $r$. Obviously if $G=(V,E)$ is $r$-orientable then so is every subgraph of it, and therefore every vertex subset $U\subset V$ of $G$ contains at most $r|U|$ edges and spans a subgraph with average degree at most $2r$. Cain, Sanders and Wormald \cite{CSW07}, and Fernholz and Ramachandran \cite{FR07} discovered independently that the threshold for $r$-orientability in $G(n,p)$ coincides with the number of edges needed to make the $(r + 1)$-core have average
degree above 2r. More concretely, they showed that for any integer $r\ge 2$, there is an explicit threshold
$\psi_r$ such that the following holds. For any $\epsilon>0$, if $p > \frac{\psi_r +\epsilon}{n}$, then \whp $G\sim G(n,p)$ contains a subgraph with average degree at least $2r + \delta$, where $\delta=\delta(\epsilon)>0$ (and is thus not $r$-orientable). On the other hand, if $p < \frac{\psi_r -\epsilon}{n}$, then $G$ is $r$-orientable {\bf whp}. \cite{BFKLS11, SST10} used these results to argue that any $\epsilon>0$, if $p > \frac{\psi_r +\epsilon}{n}$, then \whp $G\sim G(n,p)$ is such that every $r$-coloring of $E(G)$ has a monochromatic connected component of size at least $c(\epsilon)n$, whereas if $p < \frac{\psi_r -\epsilon}{n}$, then \whp the edges of $G\sim G(n,p)$ can be $r$-colored with every color class having all components of order $o(n)$. Here we prove that in the supercritical case $p > \frac{\psi_r +\epsilon}{n}$, the random graph $G(n,p)$ is typically such that every $r$-coloring of its edges produces a linearly long monochromatic cycle:

\begin{thm}\label{th4}
Given any fixed $r\ge 2$, let $\psi_r$ be the threshold for $r$-orientability. Then for any $\epsilon>0$ there exists $\delta>0$ such that if $p(n) \ge \frac{\psi_r +\epsilon}{n}$, then a random graph $G\sim G(n,p)$ is \whp such that every $r$-coloring of $E(G)$ produces a  cycle of length at least $\delta n$ in one of the colors.
\end{thm}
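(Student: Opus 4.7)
The plan is to combine the orientability threshold results cited above with Proposition \ref{pr1} and Theorem \ref{th2}. Fix $\epsilon>0$ and $p \ge (\psi_r+\epsilon)/n$. By the Cain--Sanders--Wormald and Fernholz--Ramachandran results mentioned in the introduction, \whp $G \sim G(n,p)$ contains a subgraph of average degree at least $2r+\delta_0$ for some $\delta_0=\delta_0(\epsilon)>0$. I would take $H$ to be the $(r+1)$-core of $G$: since $p$ lies comfortably above its emergence threshold, $H$ \whp has linear order $|V(H)|\ge \alpha n$ for some $\alpha=\alpha(\epsilon,r)>0$, and $|E(H)|\ge (r+\delta_0/2)|V(H)|$.

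Next, fix an arbitrary $r$-coloring of $E(G)$ and restrict attention to $E(H)$. By pigeonhole, some color $i$ contains at least $|E(H)|/r \ge (1+\delta_0/(2r))|V(H)|$ edges of $H$; let $H_i$ be the graph on vertex set $V(H)$ with precisely these edges, so $|E(H_i)|\ge c_1|V(H_i)|$ for $c_1:=1+\delta_0/(2r)$. Set $c_2:=1+\delta_0/(4r)$, so that $1<c_2<c_1$. Since $\psi_r>1$ for $r\ge 2$ and $\delta_0$ can be taken small, we have $pn>c_2$, and Proposition \ref{pr1} applied to $G(n,p)$ with parameters $pn$ and $c_2$ yields a constant $\delta_1=\delta_1(\epsilon,r)>0$ such that \whp every subset $S$ of at most $\delta_1 n$ vertices of $G$, and hence of $H_i$, spans fewer than $c_2|S|$ edges.

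Now set $k:=\lfloor\min(\delta_1,\alpha/2)\cdot n\rfloor$, so $k<|V(H_i)|$ and the local sparseness hypothesis of Theorem \ref{th2} holds in $H_i$ for all subsets of size at most $k$. For $n$ large the arithmetic condition $(k/2-1)((c_1/c_2)^{1/2}-1)\ge 2$ is automatic, since the second factor is a fixed positive constant while $k$ grows linearly in $n$. Theorem \ref{th2} then delivers a cycle of length $(k/2-1)((c_1/c_2)^{1/2}-1)+1=\Theta(n)$ inside $H_i$, and hence in color $i$, which gives the required $\delta=\delta(\epsilon,r)>0$.

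The one genuine subtlety in this outline is ensuring that the dense subgraph $H$ has linear order: without this, the pigeonhole step yields only a constant number of edges in one color class and the whole scheme collapses. This is why I would explicitly take $H$ to be the $(r+1)$-core, whose linear order above $\psi_r$ is a standard consequence of the $(r+1)$-core theorem for sparse random graphs; once $|V(H)|=\Theta(n)$ is in hand, Proposition \ref{pr1} and Theorem \ref{th2} fit together essentially mechanically.
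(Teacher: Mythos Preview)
Your argument is essentially the paper's: dense subgraph from the orientability threshold, pigeonhole on colors, local sparseness from Proposition~\ref{pr1}, then Theorem~\ref{th2}. Two small points are worth flagging.

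First, you should reduce to $p=(\psi_r+\epsilon)/n$ by monotonicity at the outset, as the paper does. As written, you apply Proposition~\ref{pr1} with first parameter $pn$, and the resulting $\delta_1=\bigl(c_2/(5pn)\bigr)^{c_2/(c_2-1)}$ then depends on $p$; if $pn$ is large this is not a constant $\delta_1(\epsilon,r)$ as you claim, and the final cycle length need not be linear in $n$ uniformly in $p$. The property ``every $r$-coloring contains a monochromatic cycle of length $\ge\delta n$'' is monotone increasing, so restricting to $p=(\psi_r+\epsilon)/n$ costs nothing.

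Second, your ``genuine subtlety'' about the linear order of $H$ is not actually a subtlety, and you do not need the $(r+1)$-core theorem to handle it. Once local sparseness is in hand, \emph{any} subgraph $G_0$ with $|E(G_0)|\ge r(1+\epsilon')|V(G_0)|$ is forced to satisfy $|V(G_0)|>\delta_1 n$: otherwise the sparseness bound would give $|E(G_0)|<c_2|V(G_0)|<2|V(G_0)|\le r(1+\epsilon')|V(G_0)|$, a contradiction. This is exactly why the paper can simply take $G_0$ to be any witnessing dense subgraph and apply Theorem~\ref{th2} with $k=\delta n$ without further comment.
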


The last group of applications of our results is in positional games. (The reader can consult monographs \cite{Beck-book, HKSS-book} for a general background on this fascinating subject.) Here we consider only games played on the edge set of the complete graph $K_n$. In a {\em Maker-Breaker} game two players, called Maker and Breaker, claim alternately free edges of the complete graph, with Maker moving first. Maker claims one edge at a time, while Breaker claims $b\ge 1$ edges (or all remaining fewer than $b$ edges if this is the last round of the game). The integer parameter $b$ is the so-called {\em game bias}. Maker wins the game if the graph of his edges in the end possesses a given graph theoretic property, Breaker wins otherwise, with draw being impossible. The type of Maker-Breaker games we consider here are cycle creation games. Bednarska and Pikhurko proved in \cite{BP05} that Maker can create a cycle in this game if and only if $b<\lceil n/2\rceil-1$. (See their other paper \cite{BP08} for results on odd and even cycle creation games.) Here we prove that shortly before the game bias $b$ crosses the critical point $n/2$, Maker can in fact create a linearly long cycle.

\begin{thm}\label{th5}
For every $0<\epsilon<1$ there exists $n_0$ such that for all $n>n_0$, Maker has a strategy to create a linearly long cycle in the $(1:b)$ Maker-Breaker game on the edges of the complete graph $K_n$ on $n$ vertices, for all $b\le (1-\epsilon)n/2$.
\end{thm}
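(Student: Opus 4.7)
Given $\epsilon \in (0,1)$, the plan is to engineer Maker's final graph $M$ so that it satisfies the hypotheses of Theorem \ref{th2}. Fix constants $c_1, c_2$ with $1 < c_2 < c_1 < \frac{1}{1-\epsilon/2}$, and let $\delta = \delta(c_1, c_2) > 0$ be small enough that Proposition \ref{pr1}, applied to $G(n, 2c_1/n)$ with sparseness parameter $c_2$, guarantees no overly dense subset of size at most $\delta n$. Set $k = \lfloor \delta n \rfloor$. Since $b \le (1-\epsilon)n/2$, the game lasts at least $\binom{n}{2}/(1+b) \ge n/(1-\epsilon) - O(1)$ rounds, so Maker automatically finishes with $|E(M)| \ge c_1 n$, settling the first hypothesis of Theorem \ref{th2}.

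The main task is to ensure that every $R \subseteq V(K_n)$ with $|R| \le k$ spans at most $c_2|R|$ Maker-edges. We would have Maker play a random strategy in the spirit of Bednarska--\L{}uczak: at each turn, Maker claims a uniformly random free edge. The guiding intuition is that, regardless of Breaker's counter-play, Maker's random choices spread his edges evenly enough that $M$ behaves locally like the random graph $G(n, 2c_1/n)$, for which Proposition \ref{pr1} directly forbids any subset of size at most $k$ from spanning more than $c_2|R|$ edges. Fixing a good outcome of Maker's randomness then yields a deterministic winning strategy, and Theorem \ref{th2} applied to $M$ produces a cycle of length $\bigl(\tfrac{k}{2}-1\bigr)\bigl((c_1/c_2)^{1/2}-1\bigr)+1 = \Omega(n)$.

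The principal obstacle is the concentration step. Maker's edges are not independent: the free-edge pool evolves adversarially with Breaker's moves, so $|E_M(R)|$ is determined by a complicated interaction of both players' choices. For each fixed $R$ of size at most $k$, the expectation $\mathbb{E}|E_M(R)|$ is roughly $\binom{|R|}{2}/(1+b) = O(|R|^2/n)$, comfortably below $c_2|R|$; the challenge is to concentrate $|E_M(R)|$ around this mean tightly enough that a union bound over the $\binom{n}{\le k}$ choices of $R$ still succeeds. A plain Azuma-type bound based on Maker's total number of moves turns out to be too weak for small $\delta$, so we would instead argue via a Chernoff-type inequality tailored to the dependence structure of Maker's random choices, or couple Maker's process with a genuinely sparse random graph for which Proposition \ref{pr1}'s proof applies verbatim. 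An entirely analogous plan would aim for the expansion condition $|N_M(W)| \ge \Omega(n)$ of Theorem \ref{th1} in place of local sparseness; the analysis is parallel.
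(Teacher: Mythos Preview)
Your overall plan --- have Maker play randomly, show the resulting graph is locally sparse, then apply Theorem~\ref{th2} --- is exactly the paper's approach. The gap is precisely where you flag it: the concentration step. You leave it as ``we would argue via a Chernoff-type inequality tailored to the dependence structure\ldots\ or couple Maker's process with a genuinely sparse random graph,'' but neither is carried out, and in fact neither is needed.

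The missing idea is simply to \emph{stop early}. Have Maker play randomly for exactly $c_1 n$ rounds (the paper takes $c_1=1+\epsilon/2$) and analyze only that prefix; Maker claims one edge per round, so $|E(M)|=c_1 n$ automatically, and there is no need to let the game run to completion. During each of those rounds the number of free edges is at least $\binom{n}{2}-(b+1)c_1 n \ge \Theta_\epsilon(n^2)$, so the probability that a \emph{specified} edge is the one Maker picks in a \emph{specified} round is at most $C_\epsilon/n^2$, uniformly over the history and over Breaker's play. Now a crude first-moment union bound suffices: for a fixed $R$ of size $i\le\delta n$ and a fixed set of $\lceil c_2 i\rceil$ edges inside $R$, the probability that all of them end up in $M$ is at most $(c_1 n)^{c_2 i}\cdot (C_\epsilon/n^2)^{c_2 i}$ (choose the rounds, then require each edge is picked at its assigned round). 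Summing $\binom{n}{i}\binom{\binom{i}{2}}{c_2 i}(O_\epsilon(1)\cdot i/n)^{c_2 i}$ over $i\le\delta n$ gives $o(1)$ for $\delta$ small enough in terms of $\epsilon$. No martingale inequality, no coupling with $G(n,p)$, and no appeal to Proposition~\ref{pr1} is required; your worry that ``a plain Azuma-type bound\ldots\ is too weak'' is a red herring, because the argument is a direct union bound, not a deviation inequality. By contrast, if you insist on playing the entire game as in your write-up, the free-edge pool eventually shrinks to nothing, the uniform per-round bound is lost, and the analysis genuinely becomes delicate --- which is why you found yourself reaching for heavier tools.
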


It is instructive to observe a very substantial difference between long cycles and long paths in this context. Indeed, Bednarska and \L uczak proved in \cite{BL01} that if $b \ge (1+\epsilon)n$ then Breaker can prevent a component of size larger than $1/\epsilon$ in Maker's graph in the $(1:b)$ Maker-Breaker game on $E(K_n)$. From the other side, Krivelevich and Sudakov showed in \cite{KS13} that for $b \le (1-\epsilon)n$ Maker has a strategy to create a linearly long path. It thus follows that for the (long) path game the threshold bias lies around $b=n$ --- quite different from the (long) cycle game, where it is asymptotically equal to $n/2$.

Finally we consider Client-Waiter long cycle games. Client-Waiter games (also called Chooser-Picker games) have become a relatively popular research subject recently, see, e.g., \cite{Bed13,HKT1,HKT2,DK16}. In a $(1:b)$ {\em Client-Waiter game} played on $E(K_n)$,  in each turn Waiter offers Client a subset of at least one and at most $b + 1$ unclaimed edges,  from which Client claims one, and the rest are claimed by Waiter. The game ends when all edges have been claimed. If Client's graph has a given graph property ${\cal P}$ by the end of the game, then he wins the game, otherwise Waiter is the winner; the game never ends in a draw. Hefetz et al. observed in \cite{HKT1} that it follows more or less immediately from the prior result of  Bednarska-Bzd\c{e}ga, Hefetz, \L uczak and the author \cite{BHKL16} that if $b\ge n/2-1$, then Waiter has a strategy to keep Client's graph acyclic throughout the game. From the other side, Dean and the author argued in \cite{DK16} that for $b \le (1-\epsilon)n/2$ Client has a strategy to create a linearly long path in the Client-Waiter game on $E(K_n)$. Here we strengthen the latter result by going from long paths to long cycles.

\begin{thm}\label{th6}
For every $0<\epsilon<1$ there exists $n_0$ such that for all $n>n_0$, Client has a strategy to create a linearly long cycle in the $(1:b)$ Client-Waiter game on the edges of the complete graph $K_n$ on $n$ vertices, for all $b\le (1-\epsilon)n/2$.
\end{thm}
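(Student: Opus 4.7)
My plan is to have Client build, via a suitable Client--Waiter strategy, a graph $C$ satisfying the local expansion hypothesis of Theorem~\ref{th1} with $k$ and $t$ linear in $n$, and then to extract a cycle of length at least $t+1 = \Omega(n)$ from $C$ by applying Theorem~\ref{th1}. This parallels \cite{DK16}, where the analogous long-path problem is solved via a locally expanding graph from which a long path is extracted by a P\'osa rotation--extension argument; here I instead invoke Theorem~\ref{th1} on essentially the same expansion to produce a long cycle, thereby lifting the long-path conclusion of \cite{DK16} to a long-cycle one.

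Fix positive constants $\alpha, \beta$ depending on $\epsilon$, and set $k := \lceil \alpha n\rceil$ and $t := \lceil \beta n\rceil$. For each pair $(Y, W)$ with $Y \subset V(K_n)$ of size $k+t-1$ and $W \subseteq Y$ of size $\lceil k/2 \rceil$, let $T_{Y, W} := E_{K_n}(W, V(K_n) \setminus Y)$. A short direct argument shows that if Client's final graph $C$ contains at least one edge from every such $T_{Y, W}$, then $|N_C(W^*)| \ge t$ for every $W^* \subseteq V(K_n)$ with $k/2 \le |W^*| \le k$; indeed, a hypothetical $W^*$ violating this could be combined with any padding of $W^* \cup N_C(W^*)$ to a set $Y$ of size $k+t-1$ and with a $\lceil k/2 \rceil$-subset $W \subseteq W^*$ to produce an unhit target. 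Theorem~\ref{th1} then yields in $C$ a cycle of length at least $t + 1 = \Omega(n)$, as required.

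To hit every target, Client plays the standard weight-minimisation strategy for Client--Waiter games (see, e.g., \cite{BHKL16, DK16}): to each unhit $T_{Y, W}$ attach the weight $(1 - 1/(b+1))^{f_{Y, W}}$, where $f_{Y, W}$ is the current number of unclaimed edges in $T_{Y, W}$, and, when Waiter offers a subset $S$ of at most $b+1$ edges, Client picks the edge of $S$ that minimises the total weight $\Phi$. A Bernoulli-inequality argument shows that $\Phi$ is non-increasing round-by-round, so it suffices to verify $\Phi_0 = \sum_{(Y, W)} (1 - 1/(b+1))^{|T_{Y, W}|} < 1$. Since $|T_{Y, W}| = \lceil k/2 \rceil (n - k - t + 1) = \Omega(n^2)$, each summand is $\exp(-\Omega(n))$, while the number of summands is $\exp(O(n))$, so for $\alpha$ and $\beta$ chosen small enough in terms of $\epsilon$ the exponential savings outweigh the combinatorial blow-up.

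The main obstacle is making $\Phi_0 < 1$ go through for every $\epsilon \in (0, 1)$, including the regime in which $b = (1-\epsilon)n/2$ is barely below the threshold $n/2$. A direct entropy-based estimate suffices when $\epsilon$ is bounded away from $0$, but for small $\epsilon$ a more delicate argument is needed---for instance, a two-phase strategy in which Client first builds a sparse auxiliary skeleton that restricts the effective collection of targets to be hit in the main phase. This refinement is the chief technical ingredient of \cite{DK16} for the long-path variant and carries over essentially unchanged to the present long-cycle setting, the only modification being that Theorem~\ref{th1} replaces the rotation--extension path-extraction step at the end.
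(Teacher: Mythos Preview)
Your route is genuinely different from the paper's, and in its present form it is not a complete proof. You aim to force Client's graph to satisfy the expansion hypothesis of Theorem~\ref{th1} via a \emph{hitting} strategy (Client must claim at least one edge from every target $T_{Y,W}$), and you correctly observe that the direct potential estimate $\Phi_0<1$ breaks down when $\epsilon$ is small. You then defer the repair to a ``two-phase strategy'' attributed to \cite{DK16} without carrying it out. That deferred step is exactly the crux of the argument; leaving it as a black-box citation is a real gap, especially since you do not verify that whatever \cite{DK16} does for paths actually produces the two-sided expansion condition $|N_C(W)|\ge t$ for all $k/2\le |W|\le k$ required by Theorem~\ref{th1} (as opposed to some weaker property tailored to P\'osa rotation). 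As written, the proposal is an outline whose hard case is outsourced.

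The paper sidesteps this difficulty entirely by switching both the structural tool and the game-theoretic criterion. Instead of Theorem~\ref{th1} plus hitting, it uses Theorem~\ref{th2} plus an \emph{avoiding} strategy: by the criterion stated as Theorem~\ref{CW} (from \cite{DK16}), Client can claim a set $E_0$ of at least $\lfloor\binom{n}{2}/(b+1)\rfloor\ge (1+\epsilon)n$ edges while avoiding every subgraph on $i\le \delta n$ vertices with more than $(1+\tfrac{\epsilon}{2})i$ edges; the relevant sum $\sum_F (b+1)^{-|E(F)|}$ is $o(1)$ for $\delta=\delta(\epsilon)$ small, uniformly over $\epsilon\in(0,1)$. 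Then Theorem~\ref{th2} applied to $([n],E_0)$ with $c_1=1+\epsilon$, $c_2=1+\tfrac{\epsilon}{2}$, $k=\delta n$ yields a cycle of length $\Theta(n)$. No two-phase refinement is needed: the local-sparseness formulation of Theorem~\ref{th2} is precisely what makes the calculation go through cleanly for every $\epsilon$, whereas your expansion-based hitting formulation runs into the entropy obstruction you identified.
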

It thus follows that for the (long) cycle Client-Waiter game the threshold bias is asymptotic to $b=n/2$.

The rest of the paper is structured as follows. In the next section we introduce several technical tools, to be utilized by forthcoming proofs. Section \ref{sect3} is devoted to the proofs of Theorems \ref{th1} and \ref{th2}. In Section \ref{sect4} we present proofs of the results about random graphs (Proposition \ref{pr1} and Theorem \ref{th4}). In Section \ref{sect5} we prove bounds on multi-color size Ramsey numbers of paths. In Section \ref{sect6} we give proofs of results about positional games, Theorems \ref{th5} and \ref{th6}.

\medskip

\noindent{\bf Notation.} Our notation is mostly standard. As stated before, for a graph $G=(V,E)$ and a vertex subset $W\subset V$ we denote by $N_G(W)$ the external neighborhood of $W$ in $G$. Given a rooted tree $T$ with root $r$ and vertex set $V$, for $v\in V$ we denote by $s(T,v)$ the size (number of vertices) of the subtree of $T$ rooted at $v$ (where as usually the subtree of $T$ rooted at $v$ includes exactly those vertices whose path to the root contains $v$). We suppress the rounding notation occasionally to simplify the presentation.

\section{Tools}\label{sect2}
In this section, we gather several notions and technical tools, needed for the proofs in subsequent sections.

\noindent{\bf Finding subtrees of prescribed total size under a vertex.}
 The following simple proposition will be used to find a collection of subtrees close to a prescribed total size, rooted at a vertex of a given rooted tree.
\begin{prop}\label{subtrees}
Let $T=(V,E)$ be a rooted tree on more than $k$ vertices. Then there exists a vertex $v\in V(T)$ and a subset $X$ of the children of $v$ in $T$ such that $\frac{k}{2}\le\sum_{u\in X} s(T,u)\le k$.
\end{prop}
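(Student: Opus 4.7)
The plan is to descend into the tree along "heavy" subtrees until further descent becomes impossible, and then split off an appropriate subset of children at the final vertex. More precisely, I will process vertices starting at the root and maintain the invariant that the current vertex $v$ satisfies $s(T,v)>k$. This is true at the root by assumption. At $v$, I look at its children; if some child $u$ has $s(T,u)>k$, I move to $u$ (still maintaining the invariant). Since subtree sizes strictly decrease as we descend and the tree is finite, this procedure must terminate at some vertex $v$ with $s(T,v)>k$ but with every child $u$ of $v$ satisfying $s(T,u)\le k$.

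Now at this final $v$, the children $u_1,\dots,u_m$ have subtree sizes summing to $s(T,v)-1\ge k$, and each individual size is at most $k$. I split into two cases. If some child $u_i$ has $s(T,u_i)\in[k/2,k]$, I simply take $X=\{u_i\}$ and am done. Otherwise, every child has $s(T,u_i)<k/2$, and I apply a standard greedy argument: order the children arbitrarily and let $\sigma_j=\sum_{i\le j} s(T,u_i)$. Then $\sigma_0=0<k/2$, $\sigma_m\ge k$, and the consecutive jumps $\sigma_j-\sigma_{j-1}=s(T,u_j)$ are all strictly less than $k/2$. So the sequence $(\sigma_j)$ cannot leap over the interval $[k/2,k]$; taking the first $j$ with $\sigma_j\ge k/2$ yields $\sigma_j\in[k/2,k)$, and I set $X=\{u_1,\dots,u_j\}$.

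The only subtle points are ensuring the invariant is properly maintained during descent (which is automatic since we only move to a child whose subtree has more than $k$ vertices) and verifying that the greedy step does not "overshoot" the target window (which follows from the fact that, after the dead-end of descent, all child subtree sizes are less than $k/2$). I do not anticipate a genuine obstacle here; the argument is a clean descent-plus-greedy combination and relies only on the structural fact that when heavy-child descent terminates, all remaining relevant sizes are already bounded by $k$.
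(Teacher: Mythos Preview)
Your proof is correct and follows essentially the same approach as the paper: descend from the root along heavy subtrees until reaching a vertex $v$ with $s(T,v)>k$ whose children all have subtree sizes at most $k$, then either pick a single child with size in $[k/2,k]$ or greedily assemble a subset of children. The only cosmetic difference is that the paper phrases the greedy step as taking a maximal-by-inclusion subset with total size at most $k$ (and arguing it must reach $k/2$), whereas you take the first prefix sum reaching $k/2$ (and argue it stays below $k$); these are equivalent.
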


\begin{proof} Let $r$ be the root of $T$. Going down the tree from the root, find a vertex $v$ such that $s(T,v)>k$ but $s(T,u)\le k$ for every child $u$ of $v$ in $T$. Let $X_0$ be the set of chidlren of $v$ in $T$. Then $\sum_{u\in X_0}s(T,u)\ge k$, but $s(T,u)\le k$ for every $u\in X_0$. If there exists a vertex $u\in X_0$ for which $s(T,u)\ge k/2$, take $X=\{u\}$. Otherwise, let $X\subseteq X_0$ be a maximal by inclusion subset of $X_0$ for which $\sum_{u\in X} s(T,u)\le k$. Then $\sum_{u\in X} s(T,u)\ge k/2$, as otherwise one can add an arbitrary vertex $u\in X_0\setminus X$ to $X$ and produce a larger subset whose subtree sizes still sum up to at most $k$ -- a contradiction to the choice of $X$.
\end{proof}

\medskip

\noindent{\bf Depth First Search.} The {\em Depth First Search} is a well known graph exploration algorithm, usually applied to discover
connected components of an input graph.  This algorithm has proven itself particularly suitable for
finding long paths in graphs, see \cite{AKS81, AKS00, BBDK06,BKS12,BKS12-2,Pok14,DP15,L16,DP16} for some examples. Here we will actually use it for finding cycles rather than paths. Since this is a fairly standard graph exploration algorithm, discussed in details in every basic algorithmic book, we will be somewhat succinct in its description below.

Recall that the DFS (Depth First Search) is a graph search algorithm that visits all vertices
of a graph. The algorithm receives as an input a graph $G=(V,E)$; it is
also assumed that an order $\sigma$ on the vertices of $G$ is given, and the algorithm prioritizes vertices
according to $\sigma$. The algorithm maintains three sets of vertices, letting $S$ be the set of vertices whose
exploration is complete, $T$ be the set of unvisited vertices, and $U = V\setminus (S\cup T)$, where the vertices of
$U$ are kept in a stack (the last in, first out data structure). It initiliazes with $S = U = \emptyset$ and $T = V$,
and runs till $U\cup T=\emptyset$. At each round of the algorithm, if the set $U$ is non-empty, the algorithm
queries $T$ for neighbors of the last vertex $v$ that has been added to $U$, scanning $T$ according to $\sigma$. If
$v$ has a neighbor $u$ in $T$, the algorithm deletes $u$ from $T$ and inserts it into $U$. If $v$ does not have a
neighbor in $T$, then $v$ is popped out of $U$ and is moved to $S$. If $U$ is empty, the algorithm chooses
the first vertex of $T$ according to $\sigma$, deletes it from $T$ and pushes it into $U$. The algorithm outputs a spanning forest $F$ of $G$, where the connected components of $G$ and of $F$ coincide, and each component of $F$ is a tree rooted at the first (according to  $\sigma$) vertex of the corresponding component.

We will need the following elementary, but perhaps less known, property of the DFS algorithm. It has been used in particular in \cite{AKS00}, see also \cite{KLS15, R14} for similar arguments.
\begin{prop}\label{dfs}
Let $G=(V,E)$ be a graph, and let $F$ be a forest produced by the DFS algorithm applied to $G$ with some ordering $\sigma$ of the vertices of $G$. If $(u,v)\in E(G)\setminus E(F)$, then one of $\{u,v\}$ is a predecessor of the other in $F$.
\end{prop}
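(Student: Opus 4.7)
My plan is to show that if $u$ enters the DFS stack $U$ before $v$, then $u$ must end up as a predecessor of $v$ in $F$; swapping the roles of $u,v$ then handles the other case. So I assume without loss of generality that $u$ is pushed onto $U$ at some time $t_u$ and $v$ at some later time $t_v>t_u$, and aim to show that $v$ lies in the subtree of $F$ rooted at $u$.

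The first ingredient is a structural observation about the stack $U$ throughout the execution: if we read the vertices currently in $U$ from bottom to top, we obtain a path $r = x_0, x_1, \ldots, x_m$ in the partially constructed forest $F$, where $x_{i+1}$ is a child of $x_i$. The reason is simply that whenever a new vertex $w$ is inserted into $U$, it is placed on top of the current top $y$ and declared a child of $y$ in $F$; and removals only happen from the top. In particular, while $u$ remains in $U$, every vertex currently above $u$ in $U$ is a descendant of $u$ in $F$.

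The second ingredient is the popping rule of DFS: $u$ is moved from $U$ to $S$ only when $u$ has no neighbor left in $T$. Since $v$ is a neighbor of $u$ in $G$ and $v\notin T$ at the time $u$ is popped, the vertex $v$ must already have been pushed onto $U$ before $u$ is popped. Therefore, at time $t_v$ the stack $U$ still contains $u$, and $v$ is pushed on top of the current top $y$, which by the first ingredient is either $u$ itself or a descendant of $u$ in $F$. Hence $v$ becomes a child of such a vertex, making $v$ a descendant of $u$ in $F$. Finally, the hypothesis $(u,v)\notin E(F)$ rules out $v$ being a child of $u$, but this does not affect the conclusion: $u$ is still a (strict) predecessor of $v$ in $F$, as required.

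I do not foresee any genuine obstacle; the only subtlety is to keep track of which vertex is currently on top of $U$ and to verify the invariant that the contents of $U$ from bottom to top form a root-to-leaf path in the growing forest, which is immediate from the way the algorithm builds $F$.
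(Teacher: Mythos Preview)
Your proof is correct and follows essentially the same approach as the paper's: both hinge on the popping rule (a vertex leaves $U$ only when it has no neighbor left in $T$) to conclude that the earlier-discovered endpoint is still on the stack when the later one is pushed, hence is its ancestor. The paper's version is terser and phrased as a contradiction, while you spell out the stack-forms-a-path invariant explicitly, but the underlying argument is the same.
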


\begin{proof} Since $u,v$ are connected by an edge, they obviously belong to the same connected component $C$ of $G$ (which is also a connected component of $F$, vertex-wise). Assume that $v$ was discovered by the algorithm (and moved to $U$) earlier than $u$. If $u$ is not under $v$ in the corresponding rooted tree $T$ of $F$, then the algorithm has completed exploring $v$ and moved it over to $S$ before getting to explore $u$ (and to move it to $U$) -- obviously a contradiction.
\end{proof}

\medskip
\noindent{\bf Client's win in Client-Waiter games.} We will use the following (reduced form of a) criterion for Client's win in biased Client-Waiter games.
\begin{thmtool}\cite{DK16}\label{CW}
Let $b$ be a positive integer, let ${\cal F}$ be a family of subgraphs of $K_n$. Assume that
$$
\sum_{F\in {\cal F}} \left(\frac{1}{b+1}\right)^{|E(F)|}<\frac{1}{2}\,.
$$
Then in the $(1:b)$ biased Client-Waiter game, played on the edges of $K_n$, Client has a strategy to claim a subset $E_0\subset E(K_n)$ of size $|E_0|\ge \lfloor \binom{n}{2}/(b + 1)\rfloor$ not containing fully the edge set of any $F\in {\cal F}$.
\end{thmtool}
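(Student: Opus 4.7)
The plan is to prove this by adapting the Erd\H{o}s--Selfridge weight function paradigm to the Client--Waiter setting, combined with a \emph{skip} mechanism that allows Client to drop awkward edges from his designated subset $E_0$ without violating the size bound.

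First I would introduce the potential. After $t$ rounds, let $C_t$, $W_t$ be the edge sets of Client and Waiter respectively, and maintain online a subset $E_0^{(t)}\subseteq C_t$. Call $F\in\mathcal{F}$ \emph{alive} at time $t$ if $F\cap W_t=\emptyset$, and set
\[
\Phi_t \;=\; \sum_{F\text{ alive}}\left(\frac{1}{b+1}\right)^{|F\setminus E_0^{(t)}|}.
\]
By hypothesis $\Phi_0<1/2$. The crucial observation is that if $F\subseteq E_0^{(t)}$ for some $F\in\mathcal{F}$, then $F$ is alive (since $E_0^{(t)}\subseteq C_t$ is disjoint from $W_t$) and contributes weight $1$ to $\Phi_t$, so maintaining $\Phi_t<1$ throughout the game precludes $F\subseteq E_0^{(t)}$ for every $F\in\mathcal{F}$.

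Next I would prescribe Client's strategy. When Waiter offers $D$ with $|D|=d\le b+1$, set $B(e)=\sum_{F\text{ alive},\,F\cap D=\{e\}}(1/(b+1))^{|F\setminus E_0^{(t)}|}$ for $e\in D$, $A=\sum_{F\text{ alive},\,|F\cap D|\ge 2}(1/(b+1))^{|F\setminus E_0^{(t)}|}$, and $S_1=\sum_{e\in D}B(e)$. Client claims $e^*\in\arg\min_{e\in D}B(e)$ and provisionally inserts $e^*$ into $E_0$; if this would push $\Phi$ to $\ge 1$, Client instead \emph{skips}, leaving $e^*$ in $C_t\setminus E_0^{(t)}$. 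A direct case analysis on how each alive $F$ is affected yields
\[
\Delta\Phi_{\mathrm{add}}=(b+1)B(e^*)-A-S_1,\qquad \Delta\Phi_{\mathrm{skip}}=B(e^*)-A-S_1\le 0,
\]
so $\Phi_t<1$ is preserved throughout and no $F\in\mathcal{F}$ is ever contained in $E_0$.

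Finally I would bound $|E_0|$ from below. Since every edge is eventually claimed and Waiter takes $d_r-1\le b$ edges in round $r$, the total round count $R$ satisfies $R\ge\lceil\binom{n}{2}/(b+1)\rceil$, and $|E_0|=R-(\textrm{number of skips})$. It therefore suffices to show the skip count does not exceed $R-\lfloor\binom{n}{2}/(b+1)\rfloor=\sum_r(b+1-d_r)/(b+1)$, and this is the part I expect to be the main obstacle. A skip at round $r$ forces $(b+1)B(e^*)\ge 1-\Phi_r+A+S_1$, which via $B(e^*)\le S_1/d_r$ implies $d_r<b+1$ and produces some slack in $R-\binom{n}{2}/(b+1)$; packaging this local observation into a tight one-to-one amortization between skips and units of slack is delicate. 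I anticipate resolving it by sharpening the skip threshold (adding $e^*$ only when the resulting $\Phi$ stays below a fixed constant $c<1$) and using the $1/2$ buffer in the hypothesis to match each forced skip with its corresponding Waiter deficit.
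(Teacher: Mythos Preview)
The paper does not prove this statement at all; it quotes it as a tool from \cite{DK16} (explicitly calling it a ``reduced form of a'' criterion from that paper). There is therefore no in-paper argument to compare your attempt against---you are attempting to supply a proof of a cited lemma.

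As for the attempt itself: your potential $\Phi_t$ is set up correctly, and the formulas $\Delta\Phi_{\mathrm{add}}=(b+1)B(e^*)-A-S_1$ and $\Delta\Phi_{\mathrm{skip}}=B(e^*)-A-S_1\le 0$ are right, as is the observation that a skip can occur only when $d_r<b+1$. But the proof is, by your own account, incomplete: you defer the bound on the number of skips to an ``anticipated'' amortization that you do not actually carry out. This missing step is the whole difficulty, not a routine detail. A skip round contributes only $(b+1-d_r)/(b+1)<1$ to the slack $R-\binom{n}{2}/(b+1)$ while removing a full unit from $|E_0|$, so the inequality you need, namely
\[
(\text{number of skips})\ \le\ \sum_r \frac{b+1-d_r}{b+1},
\]
does not follow from the mere fact that each skip round has $d_r<b+1$. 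Worse, in the extreme regime $d_r=1$ one has $S_1=B(e^*)$ and $A=0$, hence $\Delta\Phi_{\mathrm{skip}}=0$: the potential gives you no leverage precisely where Waiter is most adversarial, and the fixed ``$1/2$ buffer'' cannot by itself absorb an unbounded number of such rounds. Your suggested fixes---tightening the skip threshold to some $c<1$ and invoking the buffer---are not specific enough to see that they close this gap. As it stands, you have a correct opening and a correctly identified obstacle, but not a proof; for the actual argument you should consult \cite{DK16}.
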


\medskip
\noindent{\bf Dense subgraphs.} This claim shows that every graph contains a relatively dense induced subgraph of prescribed order.
\begin{prop}\label{dense_sub}
Let $k_1,k_2$ be positive integers, and let $G=(V,E)$ be a graph on $|V|=k_1+k_2$ vertices with $m$ edges. Then there exists a subset $R\subset V$ of cardinality $|R|=k_1$, spanning more than $m\left(\frac{k_1-1}{k_1+k_2-1}\right)^2$ edges of $G$.
\end{prop}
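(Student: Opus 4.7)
The plan is to prove this by a standard first-moment (averaging) argument over a uniformly random $k_1$-subset. Let $n = k_1 + k_2$, and let $R$ be chosen uniformly at random from the collection of all $k_1$-element subsets of $V$. For any fixed edge $e = \{u,v\} \in E(G)$, a direct counting computation gives
\[
\Pr[u, v \in R] \;=\; \frac{\binom{n-2}{k_1-2}}{\binom{n}{k_1}} \;=\; \frac{k_1(k_1-1)}{(k_1+k_2)(k_1+k_2-1)}.
\]

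By linearity of expectation, the expected number of edges of $G$ spanned by $R$ equals
\[
\mathbb{E}\bigl[e(G[R])\bigr] \;=\; m \cdot \frac{k_1(k_1-1)}{(k_1+k_2)(k_1+k_2-1)}.
\]
The next step is to compare this expected value with the bound $m\left(\frac{k_1-1}{k_1+k_2-1}\right)^2$ claimed in the statement. Dividing, this reduces to verifying
\[
\frac{k_1}{k_1+k_2} \;>\; \frac{k_1-1}{k_1+k_2-1},
\]
which after cross-multiplication simplifies to $k_2 > 0$, true by hypothesis. Hence the expectation is \emph{strictly} larger than $m\left(\frac{k_1-1}{k_1+k_2-1}\right)^2$.

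Finally I would invoke the standard averaging principle: since the expected value over random $R$ strictly exceeds the claimed threshold, at least one particular $k_1$-subset $R \subset V$ must span strictly more than $m\left(\frac{k_1-1}{k_1+k_2-1}\right)^2$ edges, as desired. There is no real obstacle here; the only delicate point is making sure the inequality is strict rather than weak, which is what forces one to carry along the factor $\frac{k_1}{k_1+k_2}$ rather than $\frac{k_1-1}{k_1+k_2-1}$ in the edge-inclusion probability, and this gain comes for free from $k_2 \geq 1$.
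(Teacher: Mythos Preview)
Your proof is correct and follows essentially the same averaging argument as the paper: choose a uniformly random $k_1$-subset, compute the edge-inclusion probability $\frac{k_1(k_1-1)}{(k_1+k_2)(k_1+k_2-1)}$, note that it strictly exceeds $\left(\frac{k_1-1}{k_1+k_2-1}\right)^2$, and conclude by linearity of expectation. Your explicit verification of the strict inequality via $k_2>0$ is a slight elaboration of what the paper leaves implicit, but the argument is otherwise identical.
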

\begin{proof} This is a simple averaging argument. Choose a subset $R$ of $k_1$ vertices uniformly at random. Then for a given edge $e\in E(G)$, the probability that both endpoints of $e$ fall into $R$ is
$$
\frac{\binom{k_1+k_2-2}{k_1-2}}{\binom{k_1+k_2}{k_1}}=\frac{k_1(k_1-1)}{(k_1+k_2)(k_1+k_2-1)}>
\left(\frac{k_1-1}{k_1+k_2-1}\right)^2\,.
$$
Then by the linearity of expectation, the expected number of edges of $G$ falling into $R$ is more than $m\left(\frac{k_1-1}{k_1+k_2-1}\right)^2$. Hence there exists a choice of $R$ spanning at least that many edges.
\end{proof}

\section{Proofs of Theorems \ref{th1} and \ref{th2}}\label{sect3}
\noindent{\bf Proof of Theorem \ref{th1}.} Observe first that $G$ has a connected component $C$ on more than $k$ vertices. Indeed, by the theorem's assumption $G$ has no connected components whose sizes fall between $k/2$ and $k$. If every connected component of $G$ has fewer than $k/2$ vertices, then, recalling that $|V(G)|>k$, we can take $W$ to be a maximal by inclusion collection of connected components whose total size is at most $k$. Then $k/2\le|W|\le k$, and $N_G(W)=\emptyset$ -- a contradiction.

Let $T$ be the tree rooted at $r$, obtained by applying the DFS algorithm to $C$, under an arbitrary order of its vertices. Then $|V(T)|=|V(C)|>k$. Apply Proposition \ref{subtrees} to $T$ to find a vertex $v$ and a subset $X$ of its children such that $k/2\le\sum_{u\in X}s(T,u)\le k$. Let $W$ be the union of the subtrees of $T$ rooted at the vertices of $X$. Denote by $P$ the path in $T$ from the root $r$ to $v$. Then $k/2\le |W|\le k$, and by Proposition \ref{dfs} we have: $N_G(W)\subseteq V(P)$. Let $v^*\in V(P)$ be the farthest from $v$ vertex in $N_G(W)\cap V(P)=N_G(W)$. Clearly its distance from $v$ it as at least $|N_G(W)|-1\ge t-1$, by the theorem's assumption. Let $w$ be a neighbor of $v^*$ in $W$. Then the cycle, formed by the path in $T$ from $w$ to $v^*$ (which includes the part of $P$ from $v$ to $v^*$) and the edge $(w,v^*)$, has length at least $t+1$.\hfill $\Box$

\medskip

 \noindent{\bf Remark.} The proof presented is obtained partially in cooperation with Noga Alon.

\bigskip

\noindent{\bf Proof of Theorem \ref{th2}.} Let $G_0$ be a minimal by inclusion non-empty induced subgraph of $G$, satisfying $\frac{|E(G_0)|}{|V(G_0)|}\ge c_1$. Then $G_0$ has the following properties:
\begin{enumerate}
\item $|V(G_0)|>k$;
\item $G_0$ is connected;
\item every subset $W\subseteq V(G_0)$ is incident to at least $c_1|W|$ edges in $G_0$
(otherwise deleting $W$ from $G_0$ produces a smaller subgraph of admissible density).
\end{enumerate}
Let $T$ be the tree rooted at $r$, obtained by applying the DFS algorithm to $G_0$, under an arbitrary order of its vertices. Then $|V(T)|=|V(G_0)|>k$. Apply Proposition \ref{subtrees} to $T$ to find a vertex $v$ and a subset $X$ of its children such that $k/2\le\sum_{u\in X}s(T,u)\le k$. Let $P$ be the path in $T$ from $r$ to $v$. Let $W$ be the union of the subtrees of $T$ rooted at the vertices of $X$. We have: $k/2\le |W|\le k$. Also, by Proposition \ref{dfs}, $N_G(W)\subseteq V(P)$. Denote $U:=N_{G_0}(W), k_1:=|W|, k_2:=|U|$. The set $U\cup W$ spans all edges of $G_0$ incident to $W$, whose number is at least $c_1k_1$, by Property 3) above. Apply Proposition \ref{dense_sub} to $G_0[U\cup W]$ to find a set $R$ of cardinality $|R|=k_1$, spanning more than $c_1k_1\left(\frac{k_1-1}{k_1+k_2-1}\right)^2$ edges. Notice that $|R|=|W|\le k$, and hence by the theorem's assumption $R$ spans at most $c_2|R|=c_2k_1$ edges. Comparing, we obtain:
$$
c_2k_1\ge c_1k_1\left(\frac{k_1-1}{k_1+k_2-1}\right)^2\,,
$$
which after simple arithmetic manipulations produces:
$$
k_2\ge \left(\left(\frac{c_1}{c_2}\right)^{1/2}-1\right)(k_1-1)\ge
 \left(\left(\frac{c_1}{c_2}\right)^{1/2}-1\right)\left(\frac{k}{2}-1\right)\,.
 $$
 Arguing as in the proof of Theorem 1, we obtain a cycle of length at least
 $$
 |N_{G_0}(W)|+1\ge k_2+1\ge \left(\left(\frac{c_1}{c_2}\right)^{1/2}-1\right)\left(\frac{k}{2}-1\right)+1\,,
 $$
 as required.
 \hfill $\Box$

 \bigskip

 \noindent{\bf Remark.} In fact, the proof above utilizes a weaker assumption about local density -- we used only that every subset $R$, whose size is between $k/2$ and $k$, spans at most $c_2|R|$ edges in $G$.

\section{Results about random graphs}\label{sect4}
\noindent{\bf Proof of Proposition \ref{pr1}.} The probability in $G(n,c_1/n)$ that there exists a vertex subset violating the required property is at most
\begin{eqnarray*}
&&\sum_{i\le \delta n}\binom{n}{i}\binom{\binom{i}{2}}{c_2i}\cdot p^{c_2i}\le
 \sum_{i\le \delta n}\left(\frac{en}{i}\right)^i\cdot \left(\frac{eip}{2c_2}\right)^{c_2i}
=\sum_{i\le\delta n}\left[\frac{en}{i}\cdot\left(\frac{ec_1i}{2c_2n}\right)^{c_2}\right]^i\\
&=&\sum_{i\le\delta n}\left[\frac{e^{c_2+1}c_1^{c_2}}{(2c_2)^{c_2}}\cdot\left(\frac{i}{n}\right)^{c_2-1}\right]^i\,.
\end{eqnarray*}

Denote the $i$-th summand of the last sum by $a_i$. Then, if $i\le n^{1/2}$ we get: $a_i\le \left(O(1)n^{-\frac{c_2-1}{2}}\right)^i$, implying $\sum_{i\le n^{1/2}}a_i=o(1)$. For $n^{1/2}\le i\le \delta n$, we have, recalling the expression for $\delta$:
$$
a_i\le \left[\frac{e^{c_2+1}c_1^{c_2}}{(2c_2)^{c_2}}\cdot\left(\frac{c_2}{5c_1}\right)^{c_2}\right]^i\le \left(e\cdot\left(\frac{e}{10}\right)^{c_2}\right)^i=o(n^{-1})\,.
$$
It follows that $\sum_{i\le\delta n}a_i=o(1)$, and the desired property of the random graphs holds {\bf whp}. \hfill$\Box$

\bigskip

\noindent{\bf Proof of Theorem \ref{th4}.} Fix $r\ge 2,\epsilon>0$. Due to monotonicity it is enough to prove the statement for $p=\frac{\psi_r+\epsilon}{n}$. According to the results from \cite{CSW07,FR07}, there exists an $\epsilon'>0$ such that a random graph $G\sim G(n,p)$ contains \whp a subgraph $G_0$ for which $\frac{|E(G_0)|}{|V(G_0)|}\ge r(1+\epsilon')$. We can assume $\epsilon'<\epsilon$. By Proposition \ref{pr1} with $c_1=\psi_r+\epsilon$, $c_2=1+\epsilon'/2$, there is $\delta>0$ such that \whp every $k\le \delta n$ vertices of $G$ span at most $k(1+\epsilon'/2)$ edges. So assume that $G$ has the above stated typical properties. Let $\phi: E(G_0)\rightarrow [r]$ be an $r$-coloring of $E(G_0)$. Choose $E_0\subseteq E(G_0)$ to be a majority color in $\phi$. Then $|E_0|\ge \frac{|E(G_0)|}{r}\ge (1+\epsilon')|V(G_0)|$. Applying Theorem \ref{th2} to the spanning subgraph of $G_0$ with edge set $E_0$ in place of $G$, $c_1=1+\epsilon'$, $c_2=1+\epsilon'/2$, $k=\delta n$, we get a cycle of length at least
$$
\left(\frac{\delta n}{2}-1\right)\left(\left(\frac{1+\epsilon'}{1+{\epsilon'}/{2}}\right)^{1/2} -1\right)+1=\Theta(n)\,,
$$
as required. \hfill$\Box$

\section{Multi-color size Ramsey numbers of paths}\label{sect5}
\noindent{\bf Lower bound.} We prove the following result:
\begin{thm}\label{th7}
 For $r\ge 3$ such that $r-2$ is a prime power, and all sufficiently large $n$, the $r$-color size Ramsey number $\hat{R}(P_n,r)$ satisfies: $\hat{R}(P_n,r)> (r-2)^2n-C\sqrt{n}$, where $C=C(r)$ is a constant depending only on $r$.
\end{thm}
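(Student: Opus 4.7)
The plan is to exhibit, for every graph $G$ with $|E(G)| \leq (r-2)^2 n - C\sqrt n$, an explicit $r$-colouring of $E(G)$ in which no colour class contains $P_n$. Setting $q := r-2$ (a prime power), the construction will use the affine plane $AG(2, q)$, which has $q^2$ points, $q+1$ parallel classes of $q$ lines each, and every two points determining a unique line. The hypothesis that $r-2$ is a prime power enters precisely here: we need an algebraic object of order $q$ to organize the colours.

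I would start with a preprocessing step: iteratively peel from $G$ all vertices of degree below a small threshold $d$, paying at most $d$ edges per peeled vertex. These peeled edges can be absorbed into one designated colour class as short stubs, contributing no long monochromatic path. Doing so leaves a core subgraph whose minimum degree is at least $d$ and whose edge count is still $(r-2)^2 n - O(\sqrt n)$; the total vertex count of the core is then $O(m/d)$ and all remaining analysis happens inside the core.

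Next, partition the vertex set of the core uniformly at random into $q^2$ blocks $\{B_p\}_{p \in \mathbb{F}_q^2}$. Each edge $uv$ with $u \in B_p$, $v \in B_{p'}$ and $p \neq p'$ is coloured by the unique parallel class of $AG(2,q)$ containing the line through $p$ and $p'$, using $q + 1 = r - 1$ colours; intra-block edges together with the peeled edges are placed into the $r$-th colour. The key structural consequence of this rule is that each of the first $r-1$ colour subgraphs decomposes into components contained entirely within the union of blocks indexed by a single line of $AG(2, q)$ -- a collection of exactly $q$ blocks.

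To conclude, I would show that no such line-confined subgraph contains $P_n$, and likewise that the $r$-th colour (which consists of intra-block edges, each block being a graph of bounded density) avoids $P_n$. Averaging tells us that each of the $q(q+1) = (r-1)(r-2)$ lines of $AG(2,q)$ captures roughly $\frac{(r-2)^2 n}{(r-1)(r-2)} = \frac{(r-2)n}{r-1}$ edges, and each such line spans only $q$ blocks of the random partition. A Chernoff-type argument should show that, with positive probability, every line carries few enough edges relative to its vertex count that its subgraph is $P_n$-free by Erd\H{o}s--Gallai. The main obstacle I foresee is controlling these edge counts and block sizes simultaneously across \emph{all} $(r-1)(r-2)$ lines and $q^2$ blocks; the additive $C\sqrt n$ correction in the statement reflects precisely the order of the typical deviations picked up when union-bounding these Chernoff estimates over all lines and blocks.
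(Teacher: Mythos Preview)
Your high-level strategy --- random partition into $q^2=(r-2)^2$ blocks indexed by the points of an affine plane, colouring cross edges by the parallel class of the line through the two indices, and reserving one colour for leftover edges --- is exactly the paper's. But two of your supporting steps are backwards or unjustified, and without them the argument does not close.

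First, the preprocessing. You peel vertices of \emph{small} degree so as to leave a core of minimum degree $\ge d$. This is the wrong direction for concentration. The random variable $A_L$ counting edges whose endpoints both land in the blocks of a fixed line $L$ is a function of the independent vertex placements, and changing the placement of a single vertex $v$ can shift $A_L$ by as much as $d_G(v)$. A bounded-differences (McDiarmid) or Chernoff-type bound therefore needs the \emph{maximum} degree in the partitioned graph to be bounded; your peeling leaves all high-degree vertices in the core, so the Lipschitz constant is uncontrolled and the $O(\sqrt n)$ deviation cannot be established. The paper does the opposite: it sets aside the set $V_0=\{v:d_G(v)\ge 6r^2\}$ of high-degree vertices, partitions only $V\setminus V_0$, and colours every edge touching $V_0$ with the reserved colour $r$. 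Since $|V_0|\le |E(G)|/(3r^2)\le n/3$ and $P_n$ has covering number at least $n/2$, colour $r$ cannot contain $P_n$; meanwhile all remaining vertices have degree $<6r^2$, which is exactly what the concentration step requires.

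Second, your treatment of the reserved colour is unjustified. You assert that the peeled edges are ``short stubs contributing no long monochromatic path'' and that intra-block subgraphs have ``bounded density'', but neither holds in general: if $G$ itself is a long path then every vertex is peeled and the entire graph ends up in colour $r$. The paper avoids this by using the covering-number argument above for colour $r$, and by assigning intra-block edges to one of the first $r-1$ colours (they are harmless there, since a block lies inside every line through its index). Finally, note that the relevant expectation per line is $|E(G)|/q^2\approx n$ (the probability that a given edge has both endpoints in a fixed line's $q$ blocks is $(q/q^2)^2=1/q^2$), not $|E(G)|/(q^2+q)=(r-2)n/(r-1)$; your smaller figure comes from averaging owned edges over all lines, which is fine, but it does not change the fact that you need concentration of order $\sqrt n$ to beat the threshold $n-1$ simultaneously for every line.
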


\begin{proof} The argument here has some similarities to the argument about coloring the edges of the complete graph without creating large monochromatic connected components, see \cite{Gya77,Fur81,BirGya87}.
%For simplicity we will assume that $r-2$ is a prime power; the case of a general $r$ can be treated using standard results about the density of primes, or by bending slightly the following proof; we omit the details here.
Let $G=(V,E)$ be a graph with $|E|\le (r-2)^2n-C\sqrt{n}$ edges; we can assume $G$ has no isolated vertices. Our aim is to show that there exists an $r$-coloring $\phi$ of $E$ without a monochromatic path on $n$ vertices.

Denote $q=r-2$, and let $A_q$ be an affine plane of order $q$. (The reader unfamiliar with the notion and terminology of affine planes is advised to consult any standard book about finite geometries, say \cite{Dem68} -- or to settle for the following brief description adapted for our purposes. An affine plane of order $q$ is a $q$-uniform hypergraph on $q^2$ vertices (points) with $q^2+q$ edges (lines), such that every pair of points is contained in a unique line, and the lines can be split into $q+1$ disjoint families (ideal points) so that the lines in every ideal point do not intersect. Such a system is known to exist for every prime power $q$.) We assume that the vertex set of $A_q$ is $\{1,\ldots,q^2\}$, and the ideal points are $X_1,\ldots,X_{q+1}$.

Let $V_0=\{v\in V(G): d_G(v)\ge 6r^2\}$. Obviously $|V_0|\le n/3$. Partition $V\setminus V_0$ at random into $q^2$ parts $V_1,\ldots,V_{q^2}$ by putting every vertex $v\in V\setminus V_0$ into $V_x$, $1\le x\le q^2$, independently and with probability $1/q^2$. Given a partition $(V_1,\ldots,V_{q^2})$, we define $\phi$ as follows. If $e$ is an edge of $G$ crossing between $V_x$ and $V_y$ for $1\le x<y\le q^2$, and $L$ is the unique line of $A_q$ passing through $x,y$, then $\phi(e)=i$, where $X_i$ is the ideal point containing $L$. If $e$ falls inside one of the parts $V_x$, color it in an arbitrary color different from $r$ in $\phi$. Finally, color all edges touching $V_0$ by color $r$ in $\phi$.

Observe that the subgraph of the edges colored $r$ under $\phi$ has its covering number bounded by $|V_0|\le n/3$; therefore, this color does not contain the path $P_n$, whose covering number is at least $n/2$. Consider now color $i$, $1\le i\le q+1=r-1$. Every connected subgraph in color $i$ has its vertex set inside $\cup_{x\in L} V_x$, for some line $L$ of $A_q$. (Recall that the lines of the ideal point $X_i$ are pairwise disjoint.) Therefore, it will suffice to argue that with positive probability (under a random partition $V\setminus V_0=V_1\cup\ldots\cup V_{q^2}$), for every line $L$ the corresponding vertex subset $\cup_{x\in L} V_x$ spans fewer than $n-1$ edges.

Consider an edge $e\in E(G)$. For a given line $L$ of $A_q$, the probability that $e\subseteq\cup_{x\in L} V_x$ is exactly $\left(\frac{|L|}{q^2}\right)^2=\frac{1}{q^2}=\frac{1}{(r-2)^2}$. Thus, introducing random variables $A_L=|E(\cup_{x\in L} V_x)|$, we obtain by the linearity of expectation that:
$$
\mathbb{E}[A_L]\le|E(G)|\cdot \frac{1}{(r-2)^2}\le n-\frac{C}{(r-2)^2}\sqrt{n}\,.
$$

Now we need to argue that the random variables $A_L$ are concentrated. Observe that changing the location (part $V_x$) of a vertex $v\in V\setminus V_0$ changes the count in any $A_L$  by at at most $d(v)\le 6r^2$. Hence, applying, say,  the bounded differences inequality of McDiarmid \cite{McD89}, we derive that the probability that the value of $A_L$  deviates from its expectation to reach $n-1$ is at most $\delta(C)$, where $\lim_{C\rightarrow\infty}\delta=0$. Applying the union bound over all lines $L$ of $A_q$ and taking $C=C(r)$ to be large enough, we conclude that with positive probability  a random partition $(V_1,\ldots,V_{q^2})$ satisfies the desired property.
\end{proof}

\noindent{\bf Remark.} In fact essentially the same proof, mutatis mutandis, gives the following Ramsey-type result: let $H$ be a connected graph with $n$ edges, whose covering number $\tau(H)$ satisfies $\tau(H)\gg \sqrt{n}$. Then $\hat{R}(H,r)=\Omega(r^2)n$. The assumption about $\tau(H)$ is essential to argue about concentration of the random variables $A_L$ in the above proof. Some lower bound assumption on the covering number of $H$ is needed here -- if $H$ is the star $K_{1,n}$, then $\hat{R}(H,r)= r(n-1)+1$ (take $G$ to be the star $K_{1,r(n-1)+1}$), providing a linear in $r$ coefficient in front of $n$.

\bigskip

\noindent{\bf Proof of Theorem \ref{th3}.} It will suffice to prove the following result instead:
\begin{thmtool}
For every $C>5$, $r\ge 2$, and all large enough $n$, the $r$-color size Ramsey number $\hat{R}(P_n,r)$ satisfies: $\hat{R}(P_n,r)<400^5Cr^{2+\frac{1}{C-4}}n$.
\end{thmtool}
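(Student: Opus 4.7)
The plan mirrors the proof of Theorem \ref{th4}: construct, via the probabilistic method, a random graph $G$ whose global density exceeds its local density (made precise by Proposition \ref{pr1}), so that the majority color class of any $r$-coloring supplies the density needed to invoke Theorem \ref{th2}. The resulting monochromatic cycle, of length at least $n$, contains the desired copy of $P_n$.

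Explicitly, set $\tilde c_2:=C-3$, $\tilde c_1:=2(C-3)$, $C':=2r\tilde c_1=4r(C-3)$, and $N:=\lceil\alpha n\rceil$ for $\alpha$ to be chosen, and draw $G\sim G(N,C'/N)$. A Chernoff bound gives $|E(G)|=(1\pm o(1))\,C'N/2$ whp, while Proposition \ref{pr1} applied with its $c_1,c_2$ taken to be $C'$ and $\tilde c_2$ respectively yields
\[
\delta \;=\; \left(\frac{\tilde c_2}{5C'}\right)^{\tilde c_2/(\tilde c_2-1)} \;=\; (20r)^{-1-1/(C-4)}
\]
such that whp every set of at most $\delta N$ vertices of $G$ spans fewer than $\tilde c_2|R|$ edges. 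Fix an instance $G$ on which all three events occur.

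In any $r$-coloring of $E(G)$, the spanning subgraph $G'$ of the most popular color has $|E(G')|\ge|E(G)|/r\ge(1-o(1))\,\tilde c_1\,|V(G')|$ edges and inherits the local sparsity of $G$. Applying Theorem \ref{th2} to $G'$ with $k=\lfloor\delta N\rfloor$ and constants $\tilde c_1,\tilde c_2$ (noting $\tilde c_1/\tilde c_2=2$, hence $\sqrt{\tilde c_1/\tilde c_2}-1=\sqrt 2-1$ up to $o(1)$) produces a cycle of length at least
\[
\left(\frac{\delta N}{2}-1\right)\!\left(\sqrt{\tilde c_1/\tilde c_2}-1\right)+1 \;\ge\; \frac{\delta\alpha n}{2}(\sqrt 2-1)-O(1),
\]
which is at least $n$ as soon as we take $\alpha\ge 5/\delta=5(20r)^{1+1/(C-4)}$; this choice also validates the hypothesis $(k/2-1)(\sqrt{c_1/c_2}-1)\ge 2$ of Theorem \ref{th2} for all sufficiently large $n$. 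The resulting monochromatic cycle contains $P_n$.

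With this choice of $\alpha$, the total edge count is
\[
|E(G)|\;\le\;(1+o(1))\,\frac{C'\alpha n}{2}\;\le\;10\,r(C-3)\,(20r)^{1+1/(C-4)}\,n\;\le\;4000\,C\,r^{2+1/(C-4)}\,n
\]
for $C>5$, $r\ge 2$, and $n$ large (using $20^{1/(C-4)}\le 20$), which sits comfortably below $400^5\,C\,r^{2+1/(C-4)}\,n$. The only ``obstacle'' is bookkeeping: one must verify that the $o(1)$ Chernoff and Proposition \ref{pr1} slack does not spoil the strict inequality $\tilde c_1>\tilde c_2$ needed by Theorem \ref{th2}, and that the enormous explicit constant $400^5$ absorbs all accumulated factors uniformly in $C>5$ and $r\ge 2$; both checks are immediate.
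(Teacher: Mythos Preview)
Your proof is correct and follows essentially the same approach as the paper: draw $G\sim G(N,\Theta(r)/N)$, use Proposition~\ref{pr1} for local sparsity, pass to the majority color class, and apply Theorem~\ref{th2}. The only differences are cosmetic: you scale the vertex set up front via $N=\lceil\alpha n\rceil$ so that the resulting cycle has length $\ge n$ directly, whereas the paper works on $n$ vertices, obtains a cycle of length $n_0$, and then reparametrizes; and you take $\tilde c_1/\tilde c_2=2$ rather than the paper's $4/3$, which yields a sharper $\delta$ exponent $(C-3)/(C-4)$ in place of $C/(C-4)$ and hence the smaller constant $4000$ versus the paper's $400^5$.
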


\begin{proof}
Consider a random graph $G$ drawn from the probability distribution $G(n,p)$ with $p=\frac{Cr}{n}$. With high probability, $G$ has between $\frac{Crn}{3}$ and $Crn$ edges, and satisfies the property stated in Proposition \ref{pr1} with $c_1=Cr$ and $c_2=C/4$. (The value of $\delta$ from that proposition then is $\delta=(20r)^{-\frac{C}{C-4}}$.) So assume that $G$ has indeed these likely properties.

Let $\phi: E(G)\rightarrow [r]$ be an $r$-coloring of $E(G)$. We need to argue that $\phi$ contains a monochromatic long path. Let $E_0$ be a majority color under $\phi$. Obviously, $|E_0|\ge \frac{|E(G)|}{r}\ge \frac{Cn}{3}$. Apply Theorem \ref{th2} with $k=\delta n$, $c_1=\frac{C}{3}$, $c_2=\frac{C}{4}$, to the graph $G_0=([n],E_0)$. We get a cycle (and thus a path) of length at least:
$$
\left(\frac{k}{2}-1\right)\left(\left(\frac{C/3}{C/4}\right)^{1/2}-1\right)+1\ge \frac{k}{20}
= \left(\frac{1}{20r}\right)^{\frac{C}{C-4}}\cdot \frac{n}{20}> \left(\frac{1}{400r}\right)^{\frac{C}{C-4}}n\,.
$$
It is only left to recalculate the parameters. Denote
$$
n_0= \left(\frac{1}{400r}\right)^{\frac{C}{C-4}}n\,.
$$
Then
$$
|E(G)|\le Crn =Cr^{1+\frac{C}{C-4}}400^{\frac{C}{C-4}}n_0\le 400^5Cr^{2+\frac{1}{C-4}}n_0\,,
$$
and $G\rightarrow (P_{n_0})_r$.
\end{proof}

\section{Results about positional games}\label{sect6}
\noindent{\bf Proof of Theorem \ref{th5}.}
We can assume that the constant $\epsilon>0$ is small enough. Maker's strategy is very simple: during the first $(1+\frac{\epsilon}{2})n$ rounds he plays {\em randomly}, i.e. chooses a uniformly random edge to claim out of the set of all available edges at that moment. Observe that in each of these rounds, the number of available edges is at least
$$
\binom{n}{2}-(b+1)\left(1+\frac{\epsilon}{2}\right)n\ge \frac{\epsilon}{3}n^2\,,
$$
meaning that the probability an edge $e$ is chosen by Maker is at most $3/(\epsilon n^2)$, regardless of the history of the game.

We aim to argue that with positive probability Maker creates a relatively dense graph without dense local spots after the first  $(1+\frac{\epsilon}{2})n$ rounds. This is in fact fairly easy to verify: the probability that there exists a set $V_0\subset [n]$ of $|V_0|\le\delta n$ vertices spanning at least $(1+\frac{\epsilon}{4})|V_0|$ edges of Maker (with $\delta=\delta(\epsilon)>0$ being a small enough constant, to be set soon) is at most
$$
\sum_{i\le \delta n}\binom{n}{i}\binom{\binom{i}{2}}{(1+\frac{\epsilon}{4})i}
\left(\left(1+\frac{\epsilon}{2}\right)n\right)^{(1+\frac{\epsilon}{4})i}
\left(\frac{1}{\frac{\epsilon}{3}n^2}\right)^{(1+\frac{\epsilon}{4})i}
$$
(choose a violating set $V_0$ of size $|V_0|=i\le\delta n$ first, then choose a set of $(1+\frac{\epsilon}{4})i$ edges inside $V_0$, then choose in which rounds of the game these edges are claimed by Maker, and finally require that all of these edges are indeed claimed at the corresponding rounds). Continuing the above estimate, we get that the probability in question is at most:
\begin{eqnarray*}
&&\sum_{i\le\delta n}\left[\frac{en}{i}\left(\frac{ei}{2(1+\frac{\epsilon}{4})}\cdot\left(1+\frac{\epsilon}{2}\right)n
\cdot \frac{1}{\frac{\epsilon}{3}n^2}\right)^{1+\frac{\epsilon}{4}}\right]^i\\
&\le&\sum_{i\le\delta n}\left[\frac{en}{i}\left(\frac{5i}{n}\right)^{1+\frac{\epsilon}{4}}\right]^i
\le \sum_{i\le\delta n}\left[20\left(\frac{i}{n}\right)^{\epsilon/4}\right]^i\,.
\end{eqnarray*}
Taking $\delta=25^{-4/\epsilon}$ gives that the above expression, and thus the probability under estimate, are both $o(1)$.

Since the game analyzed is a perfect information game with no chance moves, it is enough to prove that Maker's strategy succeeds to create a locally sparse graph with positive probability. (We proved in fact that his strategy succeeds with
probability approaching 1.)

To summarize, Maker has a strategy to create after the first $(1+\frac{\epsilon}{2})n$ rounds a graph $M$ on $n$ vertices with $|E(M)|=(1+\frac{\epsilon}{2})n$ edges, and such that every set $V_0$ of $i\le \delta n$ vertices spans at most $\left(1+\frac{\epsilon}{4}\right)i$ edges. Invoking Theorem \ref{th2}, we obtain that Maker's graph contains a cycle of length at least
$$
\left(\frac{\delta n}{2}-1\right)\left(\left(\frac{1+\frac{\epsilon}{2}}{1+\frac{\epsilon}{4}}\right)^{1/2}-1\right)+1=\Theta(n)\,,
$$
as required.\hfill$\Box$

\bigskip

\noindent{\bf Proof of Theorem \ref{th6}.}
The proof is quite similar to the previous one, and thus we will be somewhat brief. Due to the bias monotonicity, it is enough to prove the statement for $b=(1-\epsilon)n/2$. Let ${\cal F}$ be the family of all subgraphs of $K_n$ on at most $\delta n$ vertices with density $1+\frac{\epsilon}{2}$. Client's goal will be to avoid claiming any such subgraph in a subset of his edges of size $(1+\epsilon)n$. In order to argue this is feasible, we apply Theorem \ref{CW}. We need to estimate the LHS of the criteria of that theorem. We have:
$$
\sum_{i\le \delta n}\binom{n}{i}\binom{\binom{i}{2}}{(1+\frac{\epsilon}{2})i}\cdot
\left(\frac{1}{(1-\epsilon)\frac{n}{2}+1}\right)^{(1+\frac{\epsilon}{2})i}= o(1)\,,
$$
for small enough $\delta=\delta(\epsilon)>0$.

Hence, it follows from Theorem \ref{CW} that Client has a strategy to claim a set $E_0$ of $|E_0|\ge\binom{n}{2}/(b+1)\ge (1+\epsilon)n$ edges, where every $i\le \delta n$ vertices span at most $(1+\frac{\epsilon}{2})i$ edges of $E_0$. Apply Theorem \ref{th2} to the graph $G_0=([n],E_0)$ to get a linearly long cycle in Client's graph, as desired. \hfill $\Box$

\bigskip

\noindent{\bf Acknowledgement.} The author is grateful to Noga Alon for his contribution to the proof of Theorem \ref{th1}. He would also like to thank Wojciech Samotij for his helpful remarks on the first version of the paper.

\end{document}